\documentclass{amsart}

\newtheorem{theorem}[equation]{Theorem}
\newtheorem{lemma}[equation]{Lemma}

\newtheorem{proposition}[equation]{Proposition}

\numberwithin{equation}{section}

\usepackage{amscd,amsmath,amssymb,verbatim}

\begin{document}

\title{A note on the integrality of mirror maps}
\author{Alan Adolphson}
\address{Department of Mathematics\\
Oklahoma State University\\
Stillwater, Oklahoma 74078}
\email{alan.adolphson@okstate.edu}
\author{Steven Sperber}
\address{School of Mathematics\\
University of Minnesota\\
Minneapolis, Minnesota 55455}
\email{sperber@math.umn.edu}
\date{\today}
\keywords{}
\subjclass{}
\begin{abstract}
We give a class of examples of $A$-hypergeometric systems that display integrality of mirror maps.  Specifically, these systems have solutions $F(\lambda_1,\dots,\lambda_N)=1$ and $\log\lambda^l + G(\lambda_1,\dots,\lambda_N)$ (for certain $l\in{\mathbb Z}^N$) such that $\exp G(\lambda)$ has integral coefficients.  The proof requires only some elementary congruences.
\end{abstract}
\maketitle

\section{Introduction}

There is a substantial literature on the integrality of mirror maps.  Typically one starts with a differential equation having series solutions $F(\lambda)$ and $F(\lambda)\log\lambda + G(\lambda)$ and looks for conditions that imply that $\exp(G(\lambda)/F(\lambda))$ has integral coefficients (or $p$-integral coefficients for some prime $p$).  We refer the reader to Delaygue, Rivoal, and Roques \cite{DRR} and Krattenthaler and Rivoal \cite{KR} for some foundational results and further references.  Many approaches to this problem are based on $p$-adic congruences due to Dwork \cite{D,D2} and usually involve substantial computations.  Our aim here is to give a simpler example of this phenomenon.

Let $A=\{{\bf a}_j\}_{j=1}^N$ be elements of ${\mathbb Z}^n$ and write ${\bf a}_j = (a_{1j},\dots,a_{nj})$ for each~$j$.  We describe the associated $A$-hypergeometric system with parameter $\beta=(\beta_1,\dots,\beta_n)\in{\mathbb C}^n$.  It consists of the box operators
\begin{equation}
\Box_l = \prod_{l_j>0}\bigg(\frac{\partial}{\partial\lambda_j}\bigg)^{l_j} - \prod_{l_j<0}\bigg(\frac{\partial}{\partial\lambda_j}\bigg)^{-l_j}
\end{equation}
for each $l\in L$, where $L$ is the lattice of relations on the set $A$,
\[ L = \bigg\{ l=(l_1,\dots,l_N)\in{\mathbb Z}^N \bigg| \sum_{j=1}^N l_j{\bf a}_j = {\bf 0}\bigg\}, \]
and the Euler operators
\begin{equation}
Z_i=\sum_{j=1}^N a_{ij}\lambda_j\frac{\partial}{\partial\lambda_j}-\beta_i
\end{equation}
for $i=1\dots,n$.  

We assume always that there is a homogeneous linear form $h=\sum_{i=1}^n h_ix_i$ such that $h({\bf a}_j) = 1$ for all $j$.
This implies in particular that the system is regular holonomic.  We note that for $l=(l_1,\dots,l_N)\in L$ we have
\begin{equation}
\sum_{j=1}^N l_j = \sum_{j=1}^N l_jh({\bf a}_j) = h\bigg(\sum_{j=1}^N l_j{\bf a}_j\bigg) = h({\bf 0}) = 0.
\end{equation}
This equation implies that if $l\in L\setminus\{{\bf 0}\}$, then $l_j>0$ for some $j$ and $l_j<0$ for some $j$.  This implies that the constant function $F(\lambda_1,\dots,\lambda_N)=1$ satisfies the box operators (1.1).  

In this paper, we are interested in the $A$-hypergeometric systems with parameter $\beta = {\bf 0}$.  It is clear that the constant function $F(\lambda)=1$ satisfies the Euler operators (1.2) when $\beta={\bf 0}$.  We record these facts as a proposition.
\begin{proposition}
The constant function $F(\lambda)=1$ satisfies the $A$-hypergeometric system $(1.1)$, $(1.2)$ for the parameter $\beta={\bf 0}$.
\end{proposition}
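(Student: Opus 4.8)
The plan is a direct verification that each of the generating operators annihilates the constant function, treating the box operators and the Euler operators separately. The excerpt has in fact already assembled every ingredient, so the argument is short.

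First I would dispose of the box operators (1.1). If $l={\bf 0}$, then both products in $\Box_l$ are empty, so $\Box_l=1-1=0$ and the condition $\Box_l F=0$ is vacuous. If $l\in L\setminus\{{\bf 0}\}$, then equation (1.3), which gives $\sum_{j=1}^N l_j=0$, forces $l_j>0$ for at least one index $j$ and $l_{j'}<0$ for at least one index $j'$; this is exactly the observation already recorded in the excerpt. Hence each of the two monomial differential operators $\prod_{l_j>0}(\partial/\partial\lambda_j)^{l_j}$ and $\prod_{l_j<0}(\partial/\partial\lambda_j)^{-l_j}$ is a nonempty product of first-order partial derivatives, and therefore kills the constant function $F(\lambda)=1$. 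Subtracting, $\Box_l F=0$.

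Next I would handle the Euler operators (1.2) with $\beta={\bf 0}$. Here $Z_i=\sum_{j=1}^N a_{ij}\lambda_j(\partial/\partial\lambda_j)$, and since $(\partial/\partial\lambda_j)(1)=0$ for every $j$, each term vanishes and $Z_i F=0$ for $i=1,\dots,n$. Combining the two cases shows that $F(\lambda)=1$ is annihilated by all the operators (1.1) and (1.2) with $\beta={\bf 0}$, which is the assertion.

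I do not expect any genuine obstacle: the only point requiring care is the claim that for $l\neq{\bf 0}$ both products appearing in $\Box_l$ are nonempty (so that each truly lowers the degree of a polynomial), and this is precisely what (1.3) supplies via the hypothesis that $h({\bf a}_j)=1$ for all $j$. Everything else is the trivial fact that a positive-order constant-coefficient differential operator annihilates constants.
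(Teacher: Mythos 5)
Your argument is correct and matches the paper's reasoning exactly: the paper derives the vanishing under the box operators from equation (1.3) (which guarantees both products in $\Box_l$ are nonempty for $l\neq{\bf 0}$) and notes that the Euler operators with $\beta={\bf 0}$ trivially annihilate constants. No differences worth noting.
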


\section{Logarithmic solutions}

We construct the logarithmic solutions associated to the solution $F(\lambda)=1$.  
For $k=1,\dots,N$, set
\[ L_k = \{ l=(l_1,\dots,l_N)\in L\mid \text{$l_k\leq 0$ and $l_j\geq 0$ for $j\neq k$}\} \]
and define
\[ G_k(\lambda) = \sum_{\substack{l=(l_1,\dots,l_N)\in L_k\\ l_k<0}} (-1)^{-l_k-1} \frac{(-l_k-1)!}{\displaystyle \prod_{\substack{j=1\\ j\neq k}}^N l_j!}\lambda^l. \]
We first encountered these series because of their connection with the Hasse-Witt matrix of a projective hypersurface over a finite field \cite[Section~3]{AS1}.

\begin{lemma}
For $k=1,\dots,N$, the series $G_k(\lambda)$ satisfies the Euler operators $(1.2)$ with parameter $\beta = {\bf 0}$ and $\log\lambda_k + G_k(\lambda)$ satisfies the box operators $(1.1)$.
\end{lemma}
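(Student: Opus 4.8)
The Euler part is immediate: every monomial $\lambda^l$ appearing in $G_k$ has $l\in L_k\subseteq L$, hence $\sum_{j}a_{ij}l_j=0$ and therefore $Z_i\lambda^l=0$ when $\beta={\bf 0}$; summing over the terms of $G_k$ gives $Z_iG_k=0$ for all $i$. For the box operators I would realize $\log\lambda_k+G_k$ as the $\epsilon$-derivative at $\epsilon=0$ of a one-parameter family of honest $A$-hypergeometric series and then differentiate the relation $\Box_l(\text{series})=0$. Write $l=l^+-l^-$ with $l^+,l^-\geq{\bf 0}$ of disjoint support and $\partial^v:=\prod_j(\partial/\partial\lambda_j)^{v_j}$ for $v\geq{\bf 0}$, so that $\Box_l=\partial^{l^+}-\partial^{l^-}$. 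Take the exponent $\gamma=\epsilon{\bf e}_k$, which satisfies $\sum_j\gamma_j{\bf a}_j=\epsilon{\bf a}_k$, and form the $\Gamma$-series $\Phi(\lambda;\epsilon)=\sum_{l\in L}\lambda^{\gamma+l}\big/\prod_{j=1}^{N}\Gamma(\gamma_j+l_j+1)$, with the convention that $1/\Gamma$ vanishes at nonpositive integers.

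Three things must then be checked. First, $\Phi$ is killed by every $\Box_l$, $l\in L$: iterating $\partial_j\big(\lambda_j^b/\Gamma(b+1)\big)=\lambda_j^{b-1}/\Gamma(b)$ shows that $\partial^{l^+}\Phi$ and $\partial^{l^-}\Phi$ are the same series re-indexed over the cosets $L-l^+$ and $L-l^-$, and these coincide because $l^+-l^-=l\in L$; this step is the heart of the matter. Second, after setting $\gamma=\epsilon{\bf e}_k$ the factors $1/\Gamma(l_j+1)$ with $j\neq k$ force the sum onto $L_k$, giving $\Phi(\lambda;\epsilon)=\lambda_k^{\epsilon}\sum_{l\in L_k}\lambda^l\big/\big(\Gamma(\epsilon+l_k+1)\prod_{j\neq k}l_j!\big)$, and at $\epsilon=0$ only $l={\bf 0}$ survives, so $\Phi(\lambda;0)=1$. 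Third, differentiating in $\epsilon$ at $\epsilon=0$: the factor $\partial_\epsilon\lambda_k^\epsilon|_0=\log\lambda_k$ acting on the surviving $l={\bf 0}$ term yields $\log\lambda_k$, while for $l_k<0$ one has $\partial_\epsilon\big(1/\Gamma(\epsilon+l_k+1)\big)\big|_0=\tfrac{d}{ds}\tfrac1{\Gamma(s)}\big|_{s=l_k+1}=(-1)^{-l_k-1}(-l_k-1)!$, which is exactly the coefficient defining $G_k$. Thus $\partial_\epsilon\Phi|_{\epsilon=0}$ equals $\log\lambda_k+G_k(\lambda)$ up to an additive constant $-\Gamma'(1)$ coming from the $l={\bf 0}$ term; a constant is annihilated by every $\Box_l$ (each $l^\pm\neq{\bf 0}$, since $l$ has entries of both signs) and by the $\beta={\bf 0}$ Euler operators, so it is harmless — alternatively normalize $\Phi$ by $\Gamma(\epsilon+1)$ to remove it. Since $\Box_l\Phi(\lambda;\epsilon)\equiv 0$ and $\Box_l$ commutes with $\partial_\epsilon$, we conclude $\Box_l(\log\lambda_k+G_k)=0$ for all $l\in L$.

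The main obstacle is the first point — the invariance of the $\Gamma$-series under the box operators — together with the bookkeeping of which monomials survive the degeneration at $\epsilon=0$. To stay within purely elementary algebra one may replace each $\Gamma$-factor by the corresponding rational function $\prod_i(\gamma_j-i)^{\pm1}$ in $\epsilon$, whereupon $\partial^{l^+}\Phi=\partial^{l^-}\Phi$ becomes a two-term recursion among the coefficients that is an identity about products of linear forms in $\epsilon$. A more computational alternative bypasses deformation entirely and verifies $\partial^{l^+}(\log\lambda_k+G_k)=\partial^{l^-}(\log\lambda_k+G_k)$ by comparing the coefficient of each monomial, split into the cases $l_k=0$, $l_k>0$, and $l_k<0$; there the only delicate points are that monomials of $G_k$ have nonnegative exponents in every slot but the $k$th, that $\partial_k^a$ applied to a negative power of $\lambda_k$ produces a product of negative integers (which is where the factorials and signs come from), and that $\partial_k^a\log\lambda_k=(-1)^{a-1}(a-1)!\,\lambda_k^{-a}$ is matched by the single term of $G_k$ indexed by $-l$. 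I expect the deformation argument to be the shortest to carry out.
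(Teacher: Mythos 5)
Your proof is correct, but it takes a genuinely different route from the paper. For the Euler operators you argue exactly as the paper does (termwise, using $\sum_j a_{ij}l_j=0$ for $l\in L$). For the box operators, however, the paper does not deform in an auxiliary exponent: it forms the generating series $\Psi(\lambda,T)=\Phi(\lambda_k,T^{{\bf a}_k})\prod_{j\neq k}\exp(\lambda_jT^{{\bf a}_j})$, where $\Phi$ is built from a two-sided series $\xi$ satisfying $\xi'=\xi$, observes that $\partial/\partial\lambda_j$ acts on $\Psi$ as multiplication by $T^{{\bf a}_j}$, so that $\Box_{\tilde l}\Psi=(T^{\sum_{\tilde l_i>0}\tilde l_i{\bf a}_i}-T^{-\sum_{\tilde l_i<0}\tilde l_i{\bf a}_i})\Psi=0$, and then extracts the coefficient of $T^{\bf 0}$, which is exactly $\log\lambda_k+G_k$. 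Your argument is the classical Frobenius/GKZ one: take the $\Gamma$-series with exponent $\epsilon{\bf e}_k$, check it is annihilated by all $\Box_l$ via the reindexing $L-l^+=L-l^-$, note that at $\epsilon=0$ the series collapses to $1$, and identify $\log\lambda_k+G_k$ (up to the harmless constant $-\Gamma'(1)$, which every $\Box_l$ kills since $l^{\pm}\neq{\bf 0}$ for $l\neq{\bf 0}$ by (1.3)) with $\partial_\epsilon$ of the series at $\epsilon=0$; your evaluation of $\frac{d}{ds}(1/\Gamma(s))$ at nonpositive integers correctly reproduces the coefficients $(-1)^{-l_k-1}(-l_k-1)!$, and your identification of the surviving support with $L_k$ is right because $l_j\geq 0$ for $j\neq k$ forces $l_k\leq 0$ by (1.3). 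The two approaches are close cousins — the paper's $\xi$ is essentially the $\epsilon$-derivative of your deformed series packaged formally — but they buy different things: the paper's version is entirely formal and self-contained (only the identity $\xi'=\xi$ is used, no $\Gamma$-function calculus, no differentiation in an auxiliary parameter), and it yields at once that \emph{every} $T^u$-coefficient of $\Psi$ satisfies the box operators; your version connects the construction to the standard GKZ $\Gamma$-series and explains conceptually where the factorial coefficients of $G_k$ come from, at the cost of the (routine but real) bookkeeping about differentiating a formal family at $\epsilon=0$, which you rightly flag and could discharge by your rational-function substitute for the $\Gamma$-factors.
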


\begin{proof}
Since $\lambda_j\frac{\partial\lambda^l}{\partial\lambda_j}=l_j\lambda^l$, one has
\begin{multline*}
 (Z_1(G_k),\dots,Z_n(G_k)) =  \\
 \sum_{\substack{l=(l_1,\dots,l_N)\in L_k\\ l_k<0}} (-1)^{-l_k-1} \frac{(-l_k-1)!}{\displaystyle \prod_{\substack{j=1\\ j\neq k}}^N l_j!}\bigg(\sum_{j=1}^N l_j a_{1j},\dots,\sum_{j=1}^N l_ja_{nj}\bigg)\lambda^l ={\bf 0} 
 \end{multline*}
since $l\in L$.  This verifies that $G_k$ satisfies the Euler operators with $\beta={\bf 0}$.

To check the other assertion, it is useful to recall the construction of the $G_k$ using generating series \cite{AS}.  Define
\[ \xi(t) = \sum_{i=0}^\infty \frac{t^i}{i!}\bigg(\log t-\bigg(1 + \frac{1}{2} + \cdots + \frac{1}{i}\bigg)\bigg) + \sum_{i=-1}^{-\infty} (-1)^{i-1} (-i-1)!t^i. \]
For a vector $u=(u_1,\dots,u_N)\in{\mathbb C}^N$ we write $T^u = T_1^{u_1}\cdots T_N^{u_N}$.  We set
\begin{equation}
\Phi(\lambda_k,T^{{\bf a}_k}) =   \sum_{i=0}^\infty \frac{\lambda_k^i}{i!}\bigg(\log \lambda_k-\bigg(1 + \frac{1}{2} + \cdots + \frac{1}{i}\bigg)\bigg) T^{i{\bf a}_k}+ \sum_{i=-1}^{-\infty} (-1)^{i-1} (-i-1)!\lambda_k^iT^{i{\bf a}_k}. 
\end{equation}
Note that $d\xi/dt = \xi$, i.~e., the derivative of the $i$-th term is the $(i-1)$-st term for all $i\in{\mathbb Z}$.  This implies that
\begin{equation}
\frac{\partial}{\partial\lambda_i}\Phi(\lambda_k,T^{{\bf a}_k}) = \begin{cases} T^{{\bf a}_k}\Phi(\lambda_k,T^{{\bf a}_k}) & \text{if $i=k$,} \\
0 & \text{if $i\neq k$.} \end{cases}
\end{equation}

Define $\Psi(\lambda,T) = \Phi(\lambda_k,T^{{\bf a}_k})\prod_{\substack{j=1\\ j\neq k}}^N \exp(\lambda_jT^{{\bf a}_j})$.  
We have
\begin{equation}
\frac{\partial}{\partial\lambda_i}\exp(\lambda_jT^{{\bf a}_j}) = \begin{cases} T^{{\bf a}_j}\exp(\lambda_jT^{{\bf a}_j}) & \text{if $i=j$,} \\
0 & \text{if $i\neq j$.} \end{cases}
\end{equation}
Let $\tilde{l}=(\tilde{l}_1,\dots,\tilde{l}_N)\in L$.  It follows from (2.3) and (2.4) that
\begin{equation}
\Box_{\tilde{l}}(\Psi(\lambda,T)) = (T^{\sum_{\tilde{l}_i>0}\tilde{l}_i{\bf a}_i}-T^{-\sum_{\tilde{l}_i<0}\tilde{l}_i{\bf a}_i})\Psi(\lambda,T) = 0
\end{equation}
since $\sum_{\tilde{l}_i>0}\tilde{l}_i{\bf a}_i = -\sum_{\tilde{l}_i<0}\tilde{l}_i{\bf a}_i$.  

This implies that, for $u\in{\mathbb Z}^N$, the coefficient of $T^u$ in $\Psi(\lambda,T)$ satisfies the box operators.
Note that $\log\lambda_k + G_k(\lambda)$ is the coefficient of $T^{\bf 0}$ in $\Psi(\lambda,T)$, so it in particular satisfies the box operators. 
\end{proof}

The function $\log\lambda_k + G_k(\lambda)$ is not a solution of the $A$-hypergeometric system with parameter $\beta = {\bf 0}$ because $\log\lambda_k$ fails to satisfy the Euler operators with parameter $\beta = {\bf 0}$.  One has to take appropriate linear combinations to get such solutions.
\begin{proposition}
For all $\tilde{l} = (\tilde{l}_1,\dots,\tilde{l}_N)\in L$, the expression
\begin{equation}
 \sum_{k=1}^N \tilde{l}_k(\log\lambda_k + G_k(\lambda)) = \log\lambda^{\tilde{l}} +\sum_{k=1}^N \tilde{l}_kG_k(\lambda) 
 \end{equation}
satisfies the $A$-hypergeometric system with parameter $\beta = {\bf 0}$.
\end{proposition}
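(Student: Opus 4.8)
The plan is to verify the two families of operators separately, using only linearity together with the two assertions of Lemma~2.2 and the defining property of $L$.

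For the box operators the argument is immediate. Each $\Box_l$ in $(1.1)$ is a differential operator with constant coefficients, hence ${\mathbb C}$-linear, and by Lemma~2.2 each $\log\lambda_k + G_k(\lambda)$ is annihilated by every $\Box_l$ with $l\in L$. Therefore the finite linear combination $\sum_{k=1}^N \tilde{l}_k(\log\lambda_k + G_k(\lambda))$ is annihilated by every $\Box_l$ as well; the first equality in $(2.7)$ is just the definition of $\log\lambda^{\tilde{l}}$.

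For the Euler operators I would split the right-hand side of $(2.7)$ into the pieces $\log\lambda^{\tilde{l}}$ and $\sum_{k=1}^N \tilde{l}_kG_k(\lambda)$. The second piece is handled by Lemma~2.2 and the linearity of the $Z_i$: each $G_k$ satisfies $Z_i(G_k)=0$ for $\beta={\bf 0}$, hence so does $\sum_{k=1}^N \tilde{l}_kG_k$. The first piece is a one-line computation: since $\lambda_j\frac{\partial}{\partial\lambda_j}\log\lambda_k=\delta_{jk}$, we get, with $\beta={\bf 0}$,
\[ Z_i\big(\log\lambda^{\tilde{l}}\big) = \sum_{j=1}^N a_{ij}\lambda_j\frac{\partial}{\partial\lambda_j}\Big(\sum_{k=1}^N \tilde{l}_k\log\lambda_k\Big) = \sum_{j=1}^N a_{ij}\tilde{l}_j, \]
which is the $i$-th coordinate of $\sum_{j=1}^N \tilde{l}_j{\bf a}_j={\bf 0}$ because $\tilde{l}\in L$. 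Hence $Z_i(\log\lambda^{\tilde{l}})=0$ for $i=1,\dots,n$, and adding the two pieces shows that $(2.7)$ is annihilated by all the $Z_i$.

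There is no genuine obstacle here; the substance is entirely in Lemma~2.2, and the only reason for restricting $\tilde{l}$ to lie in $L$ (rather than taking an arbitrary integer vector) is precisely that the displayed computation then kills the $\log\lambda^{\tilde{l}}$ term. The single point deserving a word of care is that each $Z_i$ and each $\Box_l$ acts term-by-term on the formal series $G_k(\lambda)$, so that the linearity manipulations above are legitimate; this is already implicit in the proof of Lemma~2.2.
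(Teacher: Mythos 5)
Your argument is correct and follows the same route as the paper: the paper's proof simply observes that $\log\lambda^{\tilde{l}}$ satisfies the Euler operators with $\beta={\bf 0}$ (the computation you carry out explicitly, using $\sum_{j=1}^N \tilde{l}_j{\bf a}_j={\bf 0}$) and then invokes Lemma~2.1 together with linearity for the box operators and the $G_k$ terms. The only cosmetic slip is your citation of ``Lemma~2.2'' where the paper's lemma is numbered 2.1.
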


\begin{proof}
It is straightforward to check that $\log\lambda^{\tilde{l}}$ satisfies the Euler operators with parameter $\beta = {\bf 0}$.  The proposition then follows from Lemma~2.1.
\end{proof}

The solutions of Proposition 2.6 are, {\it \`a priori}, formal solutions.  In Section 5 below we show that they belong to a Nilsson ring, hence are actual solutions.  

\section{Some elementary congruences}

In this section we record the elementary congruences needed to prove that $\exp G_k(\lambda)$ has integral coefficients.  These congruences may be well known, but we are unaware of a convenient reference so we include their simple proofs.

Let $e_1,\dots,e_N$ be nonnegative integers and set $e=\sum_{i=1}^N e_i$.  We consider the multinomial coefficients
\[ \binom{e}{e_1,\dots,e_N} = \frac{e!}{e_1!\cdots e_N!}. \]
Let $p^a$ be the highest power of the prime $p$ dividing $e$ and let $p^b$ be the highest power of $p$ dividing all $e_i$.  We have $a\geq b$ since $e$ is the sum of the $e_i$.  

\begin{proposition}  
The multinomial coefficient $\binom{e}{e_1,\dots,e_N}$ is divisible by $p^{a-b}$.
\end{proposition}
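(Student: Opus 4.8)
The plan is to reduce the statement to a standard fact about $p$-adic valuations of factorials, namely Legendre's formula $v_p(m!) = (m - s_p(m))/(p-1)$, where $s_p(m)$ denotes the sum of the base-$p$ digits of $m$ and $v_p$ is the $p$-adic valuation. Applying this to the multinomial coefficient gives
\[
v_p\binom{e}{e_1,\dots,e_N} = \frac{1}{p-1}\bigg(\sum_{i=1}^N s_p(e_i) - s_p(e)\bigg),
\]
so the task becomes showing that $\sum_{i=1}^N s_p(e_i) - s_p(e) \geq (p-1)(a-b)$.

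First I would dispose of the easy bound: the classical carry interpretation (Kummer's theorem, generalized to several summands) says that $v_p\binom{e}{e_1,\dots,e_N}$ equals the total number of carries when the $e_i$ are added in base $p$, so it is certainly nonnegative, and in fact $\sum s_p(e_i) - s_p(e) \geq 0$ always. The point is to extract the factor $a - b$. Since $p^b \mid e_i$ for every $i$, the last $b$ base-$p$ digits of each $e_i$ vanish; write $e_i = p^b e_i'$. Then $s_p(e_i) = s_p(e_i')$, and $e = p^b e'$ with $e' = \sum e_i'$, so $s_p(e) = s_p(e')$. Thus the quantity is unchanged under dividing out $p^b$, and we may assume $b = 0$, i.e. not all $e_i$ are divisible by $p$ — so $s_p(e') \not\equiv 0$ in the relevant sense — while $p^a \mid e$ still. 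Now I want $\sum s_p(e_i') - s_p(e') \geq (p-1)a$.

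The key step is then this: when adding the $e_i'$ in base $p$ to form $e'$, the fact that $p^a \mid e'$ forces the bottom $a$ digits of the sum to be zero. I would argue digit by digit through positions $0, 1, \dots, a-1$: at each such position the incoming column sum (digits of the $e_i'$ plus the carry-in) must be a nonzero multiple of $p$ — nonzero because at position $0$ not all digits vanish (as $b=0$) and inductively a carry propagates — hence at least $p$, producing a carry-out of at least $1$ and, more importantly, a deficit of at least $p - 1$ between the column digit sum and the recorded digit $0$. Summing these deficits over the $a$ positions shows $\sum s_p(e_i') - s_p(e') \geq (p-1)a$, since each carry of size $c$ from position $j$ to position $j+1$ contributes $(p-1)c$ to the difference $\sum s_p(e_i') - s_p(e')$ net of what it removes downstream — more cleanly, one uses the identity $\sum_i s_p(e_i') - s_p(e') = (p-1)\cdot(\text{total number of carries})$ and observes there is at least one carry out of each of the positions $0,\dots,a-1$.

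The main obstacle is making the "at least one carry out of each of the first $a$ positions" argument airtight: one must handle the interaction between the carry-in and the requirement that the output digit be $0$, and confirm that once a carry is generated at position $0$ (guaranteed by $b = 0$), the chain of forced-zero digits up to position $a-1$ keeps a carry alive at every step. This is a short induction on the digit position, but it is where the hypotheses $p^b \mid$ all $e_i$ and $p^a \mid e$ actually get used, so it deserves care. Everything else — Legendre's formula, the reduction to $b = 0$, and the final summation — is routine.
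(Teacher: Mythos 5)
Your argument is correct, but it takes a genuinely different route from the paper. You compute the exact $p$-adic valuation of the multinomial coefficient via Legendre's formula, reduce to $b=0$ by stripping the common factor $p^b$ from all the $e_i$ (which leaves every digit sum unchanged), and then invoke the carry identity $\sum_i s_p(e_i)-s_p(e)=(p-1)\cdot(\text{total carries})$ together with the observation that the hypotheses force at least one carry out of each of the digit positions $0,\dots,a-1$: position $0$ carries because $b=0$ makes that column sum a nonzero multiple of $p$, and the carry then propagates because the digits of $e$ in positions $1,\dots,a-1$ all vanish. That induction is sound as you sketch it. The paper instead uses the one-line identity $\binom{e}{e_1,\dots,e_N}=\frac{e}{e_1}\binom{e-1}{e_1-1,e_2,\dots,e_N}$, with $e_1$ taken (after reordering) to be a summand of minimal $p$-adic valuation $b$, so that $e/e_1$ already supplies the factor $p^{a-b}$ and the remaining multinomial coefficient is an integer. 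The paper's route is shorter and uses nothing beyond integrality of multinomial coefficients; yours is longer but delivers the exact valuation (Kummer's theorem for several summands) rather than only a lower bound, and it makes visible exactly where the hypotheses $p^b\mid e_i$ and $p^a\mid e$ enter. Two bits of bookkeeping to tidy: after dividing out $p^b$ the power of $p$ dividing $e'$ is $a-b$, not $a$, so your target $(p-1)a$ is correct only after renaming $a-b$ as $a$; and the degenerate case in which all $e_i=0$ should be set aside at the outset, as the paper does with the convention $\infty-\infty=0$.
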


\begin{proof}
If all $e_i=0$ the result is trivial (taking $\infty-\infty = 0$).  So suppose $e_1>0$.  Then
\[ \binom{e}{e_1,\dots,e_N} = \frac{e}{e_1}\binom{e-1}{e_1-1,e_2,\dots,e_N}. \]
The fraction $e/e_1$ is divisible by $p^{a-b}$ and $\binom{e-1}{e_1-1,e_2,\dots,e_N}$ is an integer.
\end{proof}

\begin{proposition}
The difference
\begin{equation}
\binom{pe}{pe_1,\dots,pe_N}-\binom{e}{e_1,\dots,e_N}
\end{equation}
is divisible by $p^{a+1}$.
\end{proposition}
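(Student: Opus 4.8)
The plan is to compute the coefficient of $x_1^{pe_1}\cdots x_N^{pe_N}$ in the polynomial $(x_1+\cdots+x_N)^{pe}$ in two different ways. First I would record the special case of Proposition 3.1 in which $e$ is replaced by $p$ (so there $a=1$ and $b=0$): a multinomial coefficient $\binom{p}{p_1,\dots,p_N}$ with $p_1+\cdots+p_N=p$ is divisible by $p$ unless some $p_i$ equals $p$ and the remaining ones vanish. Consequently
\[ (x_1+\cdots+x_N)^p = (x_1^p+\cdots+x_N^p) + p\,h(x_1,\dots,x_N) \]
for some $h\in{\mathbb Z}[x_1,\dots,x_N]$.

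Next I would raise this identity to the $e$-th power and expand by the binomial theorem, writing $S = x_1^p+\cdots+x_N^p$ and treating $S$ and $ph$ as the two summands:
\[ (x_1+\cdots+x_N)^{pe} = (S + ph)^e = \sum_{j=0}^e \binom{e}{j}\, p^j\, h^j\, S^{e-j}. \]
The coefficient of $x_1^{pe_1}\cdots x_N^{pe_N}$ on the left is $\binom{pe}{pe_1,\dots,pe_N}$. On the right, the $j=0$ term contributes the coefficient of $x_1^{pe_1}\cdots x_N^{pe_N}$ in $S^e=(x_1^p+\cdots+x_N^p)^e$; after the substitution $y_i=x_i^p$ and using $e_1+\cdots+e_N=e$, this equals exactly $\binom{e}{e_1,\dots,e_N}$. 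Each term with $j\geq1$ contributes $\binom{e}{j}\,p^j$ times the coefficient of $x_1^{pe_1}\cdots x_N^{pe_N}$ in the integer polynomial $h^jS^{e-j}$, hence an integer multiple of $\binom{e}{j}\,p^j$. Thus the difference $\binom{pe}{pe_1,\dots,pe_N}-\binom{e}{e_1,\dots,e_N}$ is a ${\mathbb Z}$-linear combination of the numbers $\binom{e}{j}\,p^j$ with $1\leq j\leq e$, and it remains to check that each of these is divisible by $p^{a+1}$.

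For this last point I would use $\binom{e}{j}=\frac{e}{j}\binom{e-1}{j-1}$ with $\binom{e-1}{j-1}\in{\mathbb Z}$, which shows that $\binom{e}{j}$ is divisible by $p^{a-c}$, where $p^c$ is the exact power of $p$ dividing $j$; hence $\binom{e}{j}\,p^j$ is divisible by $p^{a+(j-c)}$, and the claim reduces to the inequality $j-c\geq1$ for every $j\geq1$. This holds because $p^c\leq j$ while $p^m\geq m+1$ for all integers $m\geq0$, so that $c+1\leq p^c\leq j$.

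I do not anticipate a genuine obstacle here: the whole argument is elementary, resting only on the "freshman's dream" identity above and a valuation estimate for $\binom{e}{j}$. The only place that calls for a little care is the bookkeeping in the coefficient extraction — confirming that the $j=0$ term produces precisely $\binom{e}{e_1,\dots,e_N}$ and that every term with $j\geq1$ carries a factor $p^{a+1}$.
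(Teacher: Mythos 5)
Your argument is correct, and it is a genuinely different proof from the one in the paper. The paper works multiplicatively: it strips the factors divisible by $p$ out of $(pe)!$ and the $(pe_i)!$ to write $\binom{pe}{pe_1,\dots,pe_N}=\binom{e}{e_1,\dots,e_N}B$, where $B$ is a ratio of products of integers prime to $p$; pairing each factor $p(e-e_1-\cdots-e_{i-1})-j$ in the numerator with $pe_i-j$ in the denominator shows $B\in 1+p^{b+1}({\mathbb Q}\cap{\mathbb Z}_p)$, and combining with the divisibility of $\binom{e}{e_1,\dots,e_N}$ by $p^{a-b}$ from Proposition~3.1 yields $p^{(a-b)+(b+1)}=p^{a+1}$. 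You instead argue additively via coefficient extraction: the congruence $(x_1+\cdots+x_N)^p\equiv x_1^p+\cdots+x_N^p \pmod{p}$ (itself the case $e=p$ of Proposition~3.1), raised to the $e$-th power, exhibits the difference as a ${\mathbb Z}$-linear combination of the quantities $\binom{e}{j}p^j$ for $1\leq j\leq e$, each of which you correctly show has $p$-adic valuation at least $a-c+j\geq a+1$ using $\binom{e}{j}=\frac{e}{j}\binom{e-1}{j-1}$ and the inequality $j\geq p^{c}\geq c+1$ where $p^c$ exactly divides $j$. Your route bypasses the auxiliary exponent $b$ entirely and reuses the same $\frac{e}{e_1}\binom{e-1}{e_1-1,\dots}$ trick that drives Proposition~3.1, so it is arguably the more self-contained argument; the paper's route has the side benefit of isolating the multiplicative congruence $B\equiv 1\pmod{p^{b+1}}$, which is a sharper structural statement (of Jacobsthal type) than the bare divisibility of the difference. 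Both proofs are complete and elementary; the only bookkeeping points in yours, namely that the $j=0$ term contributes exactly $\binom{e}{e_1,\dots,e_N}$ and that the degenerate case $e=0$ is vacuous, check out.
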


\begin{proof}
The factors of $(pe)!$ divisible by $p$ are
\[ (pe)(p(e-1))\cdots (p\cdot 1) = p^e e!. \]
It follows that we can write
\[ \binom{pe}{pe_1,\dots,pe_N} = \frac{p^e e!}{p^{e_1+\cdots+e_N}e_1!\cdots e_N!}B = \binom{e}{e_1,\dots,e_N}B, \]
where the rational number $B$ is the quotient of all factors in $\binom{pe}{pe_1,\dots,pe_N}$ that are prime to $p$.  The difference (3.3) may thus be written
\[ \binom{e}{e_1,\dots,e_N}(B-1). \]
By Proposition 3.1, to prove Proposition 3.2 it thus suffices to prove that
\begin{equation}
B-1\in p^{b+1}({\mathbb Q}\cap{\mathbb Z}_p).
\end{equation}

We first factor $B$ as a product of rational numbers $B_1,\dots,B_N$ defined as follows.  Set $e_0=0$ for indexing purposes.  The denominator of $B_i$ is the product of those factors of $(pe_i)!$ that are prime to $p$:
\begin{equation}
\text{denominator of $B_i$} = \prod_{\substack{j=0\\ p\not\:|j}}^{pe_i-1}(pe_i-j).
\end{equation}
The numerator of $B_i$ is the product of those factors between $p(e-e_1-\cdots-e_{i-1})$ and $p(e-e_1-\cdots-e_i)$ that are prime to $p$:
\begin{equation}
\text{numerator of $B_i$} = \prod_{\substack{j=0\\ p\not\:|j}}^{pe_i-1} (p(e-e_1-\cdots-e_{i-1})-j).
\end{equation}
We now have $B=B_1\cdots B_N$.  Furthermore, the ratio of a term of (3.6) to the corresponding term of (3.5) is
\begin{equation}
\frac{p(e-e_1-\cdots-e_{i-1})-j}{pe_i-j}.
\end{equation}
Since $e$ and all $e_i$ are divisible by $p^b$ and $j$ is prime to $p$, the ratio (3.7) lies in $1+p^{b+1}({\mathbb Q}\cap{\mathbb Z}_p)$.  It follows that all $B_i$ and therefore $B$ itself lie in $1+p^{b+1}({\mathbb Q}\cap{\mathbb Z}_p)$.  This establishes (3.4).  
\end{proof}

\section{Integrality}

Note that the series $G_k(\lambda)$ has exponents $l=(l_1,\dots,l_N)$ in the orthant where $l_k\leq 0$ and $l_j\geq 0$ for $j\neq k$.  Additionally we always have $l_k<0$, so the series $\exp G_k(\lambda)$ is well-defined as a power series with exponents in that same orthant.
In this section we prove the main theorem:
\begin{theorem}
For $k=1,\dots,N$, the series $\exp G_k(\lambda)$ has integral coefficients.
\end{theorem}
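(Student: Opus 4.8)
The plan is to prove integrality by a $p$-adic argument: for each prime $p$ it suffices to show that $\exp G_k(\lambda) \in {\mathbb Z}_p[[\lambda^{L_k}]]$, where I write $\lambda^{L_k}$ for the monomials with exponents in the orthant described before the theorem. The standard Dwork-style criterion for this kind of statement says that if $H(\lambda) = \sum_u c_u \lambda^u$ is a power series with $c_{\bf 0} = 0$ (here $H = G_k$, and indeed every exponent occurring has $l_k < 0$, so there is no constant term), then $\exp H(\lambda)$ has $p$-integral coefficients as soon as one knows the ``Frobenius congruence''
\[
\mathcal{F}_p\big(H(\lambda)\big) := H(\lambda^p) - p\, H(\lambda) \in p{\mathbb Z}_p[[\lambda^{L_k}]],
\]
i.e.\ $H(\lambda^p) \equiv p H(\lambda) \bmod p{\mathbb Z}_p[[\lambda]]$, together with the observation that $\exp(\mathcal{F}_p(H))$ then has $p$-integral coefficients and $\exp H(\lambda^p) = (\exp H(\lambda))^{\!\sigma}$ under the substitution $\lambda_j \mapsto \lambda_j^p$; a formal induction on the $p$-adic valuation of the coefficients (Dieudonné--Dwork lemma, as in \cite{D,D2}) then upgrades this to $\exp H \in {\mathbb Z}_p[[\lambda]]$. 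So the whole theorem reduces to verifying this single congruence for $H = G_k$.

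First I would write out $\mathcal{F}_p(G_k)$ explicitly. The coefficient of $\lambda^l$ in $G_k$, for $l \in L_k$ with $l_k < 0$, is $(-1)^{-l_k - 1}(-l_k-1)!\big/\prod_{j\neq k} l_j!$. A monomial $\lambda^m$ appears in $G_k(\lambda^p)$ precisely when $m = pl$ for some such $l$ (note $pl \in L_k$ as well, since $L$ is a lattice), with coefficient the value above at $l$; it appears in $p\,G_k(\lambda)$ only when $m \in L_k$ already, with coefficient $p$ times the value at $m$. Matching the monomial $\lambda^{pl}$ on both sides, the congruence I must prove is
\[
(-1)^{-pl_k - 1}\frac{(-pl_k - 1)!}{\prod_{j\neq k}(pl_j)!} \;\equiv\; p \cdot (-1)^{-l_k-1}\frac{(-l_k - 1)!}{\prod_{j\neq k} l_j!} \pmod{p{\mathbb Z}_p},
\]
and for monomials $\lambda^m$ with $m \in L_k$, $m \not\in pL_k$, that the coefficient of $\lambda^m$ in $p\,G_k(\lambda)$ already lies in $p{\mathbb Z}_p$ --- which is automatic once I check that each coefficient of $G_k$ is itself $p$-integral. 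That $p$-integrality is exactly Proposition~3.1: writing $e_j = l_j$ for $j \neq k$ and $e = -l_k$ (so $e = \sum_{j\neq k} e_j$ by $l \in L$, using $(1.3)$), the coefficient of $\lambda^l$ equals $\pm \binom{e}{e_1,\dots}\big/ e$ times $(e-1)!\cdots$ — more precisely $\pm (e-1)!/\prod_{j\neq k}e_j! = \pm\frac{1}{e}\binom{e}{e_1,\dots,e_N}$, and Proposition~3.1 gives $p^{a-b} \mid \binom{e}{e_1,\dots}$ where $p^a \| e$, so $\binom{e}{e_1,\dots}/e \in p^{-b}{\mathbb Z}_p$; combined with $p^b \mid$ each $e_j$ this is where a little care is needed, but $(e-1)!/\prod e_j!$ is in fact manifestly a $p$-integer by the same argument applied after pulling out one factor, so $G_k$ has $p$-integral coefficients.

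The heart of the matter is the displayed congruence, and this is exactly what Proposition~3.2 is built for. Writing $e = -l_k$, $e_j = l_j$ ($j \neq k$), both sides involve the quantity $(pe-1)!/\prod(pe_j)!$ versus $p\cdot (e-1)!/\prod e_j!$; multiplying through, the claim is equivalent to $\frac{1}{pe}\binom{pe}{pe_1,\dots,pe_N} \equiv \frac{1}{e}\binom{e}{e_1,\dots,e_N} \bmod p{\mathbb Z}_p$, i.e.\ after clearing the common factor $p^{-a-1}$ coming from $p^a \| e$, that $\binom{pe}{pe_1,\dots,pe_N} \equiv \binom{e}{e_1,\dots,e_N} \bmod p^{a+1}$ — which is precisely the statement of Proposition~3.2. (The sign factors $(-1)^{-pl_k-1}$ and $(-1)^{-l_k-1}$ agree mod $2$ for $p$ odd; for $p = 2$ one checks $(-1)^{-2l_k-1} = -1 = (-1)^{-l_k-1}\cdot(-1)^{l_k}$ — a parity bookkeeping point I would spell out, noting $(-1)^{l_k} \equiv 1 \bmod 2{\mathbb Z}_2$ is false, so here one instead observes both coefficients are odd multiples of the same $2$-power, or simply that $2{\mathbb Z}_2$-congruence of absolute values suffices once one tracks that the sign of the $\lambda^{pl}$-coefficient of $G_k(\lambda^p)$ is by definition the sign of the $\lambda^l$-coefficient of $G_k$.)

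The main obstacle I anticipate is not any of these congruences individually — they are handed to us in Section~3 — but rather making the reduction airtight: confirming that $\exp G_k$ is a well-defined power series supported in the single orthant $\{l_k \le 0,\ l_j \ge 0\}$ (stated before the theorem, and needed so that $\exp$ makes sense formally and so that the Dwork lemma applies in the many-variable setting with exponents in a strongly convex cone), and correctly matching up \emph{which} monomials of $p\,G_k(\lambda)$ are hit by $G_k(\lambda^p)$ so that the ``leftover'' terms land in $p{\mathbb Z}_p$. Once the bookkeeping of exponents and signs is in place, Propositions~3.1 and~3.2 close the argument immediately.
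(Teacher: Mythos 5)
Your overall strategy is exactly the paper's: apply the multivariate Dieudonn\'e--Dwork lemma to reduce integrality of $\exp G_k$ to the congruence $pG_k(\lambda)-G_k(\lambda^p)\in p({\mathbb Q}\cap{\mathbb Z}_p)[[\lambda_1,\dots,\lambda_k^{-1},\dots,\lambda_N]]$, split that into the ``leftover'' monomials (Proposition~3.1 with $b=0$) and the monomials $\lambda^{pl}$ (Proposition~3.2). However, your central displayed congruence has the factor of $p$ on the wrong side, and as written it is false. With $\mathcal{F}_p(H)=H(\lambda^p)-pH(\lambda)$, the coefficient of $\lambda^{pl}$ in $H(\lambda^p)$ is the $\lambda^{l}$-coefficient $c_l$ of $G_k$, while the coefficient of $\lambda^{pl}$ in $pH(\lambda)$ is $p\,c_{pl}$; so what you must prove is $p\,c_{pl}\equiv c_l \pmod{p{\mathbb Z}_p}$, not $c_{pl}\equiv p\,c_l$. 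The latter cannot hold: $c_{pl}=\pm\frac{1}{pl_k}\binom{-pl_k}{pl_1,\dots,\widehat{pl}_k,\dots,pl_N}$ need not even be $p$-integral (in the one-relation example where $G_k=\log(1+\lambda_2/\lambda_1)$ one has $c_{pl}=\pm 1/(pm)$), so it is not congruent to $p\,c_l$ modulo $p{\mathbb Z}_p$. Your subsequent ``equivalent'' form $\frac{1}{pe}\binom{pe}{pe_1,\dots,pe_N}\equiv\frac{1}{e}\binom{e}{e_1,\dots,e_N}$ inherits the same misplacement and is also false, and the step ``clearing the common factor $p^{-a-1}$'' does not parse because $1/(pe)$ and $1/e$ have different $p$-adic valuations. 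The correct chain, as in the paper's (4.6), is $p\,c_{pl}-c_l=\mp\frac{1}{l_k}\bigl(\binom{-pl_k}{\cdots}-\binom{-l_k}{\cdots}\bigr)$, after which Proposition~3.2 supplies divisibility by $p^{a+1}$, exactly cancelling the valuation $a$ of $l_k$ and leaving one factor of $p$. So the ingredients are the right ones and the repair is mechanical, but the congruence you actually wrote down would fail.

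Second, the $p=2$ sign issue is flagged but not resolved. When $p=2$ and $l_k$ is odd, $(-1)^{-2l_k-1}=-1$ while $(-1)^{-l_k-1}=+1$, so $p\,c_{pl}-c_l$ becomes, up to overall sign, $\frac{1}{l_k}$ times a \emph{sum} of the two multinomial coefficients rather than a difference, and Proposition~3.2 no longer applies directly; your suggested fixes (``both coefficients are odd multiples of the same $2$-power'') are not substantiated. The paper's resolution is one line: for $l_k$ odd the single term $\frac{1}{l_k}\binom{-l_k}{l_1,\dots,\hat{l}_k,\dots,l_N}$ lies in ${\mathbb Q}\cap{\mathbb Z}_2$, so flipping its sign changes the expression by an element of $2{\mathbb Z}_2$ and the sum version is equivalent to the difference version already handled. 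You should also note that $b=0$ in the application of Proposition~3.1 to the leftover monomials precisely because those are the $l\in L_k$ with not all coordinates divisible by $p$; your sketch of that half wanders through an unnecessary (and sign-confused) detour before arriving at the right conclusion that each coefficient of $G_k$ is $p$-integral.
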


\begin{proof}
By the multivariable version of the Dieudonn\'e-Dwork Lemma (Krattenthaler and Rivoal \cite[Lemmas~1 and~2]{KR}) the series 
$\exp G_k(\lambda)$ will have $p$-integral coefficients for a prime $p$ if and only if
\begin{equation}
pG_k(\lambda) - G_k(\lambda^p)\in p({\mathbb Q}\cap{\mathbb Z}_p)[[\lambda_1,\dots,\lambda_k^{-1},\dots,\lambda_N]].
\end{equation}
We prove this condition is satisfied for all primes $p$.

From the definition of $G_k(\lambda)$, this requires the following two types of congruences to be satisfied.  If $(l_1,\dots,l_N)\in L_k$ and not all of $l_1,\dots,l_N$ are divisible by $p$, we need
\begin{equation}
 p\frac{(-l_k-1)!}{\displaystyle\prod_{\substack{j=1\\ j\neq k}}^N l_j!}\in p({\mathbb Q}\cap{\mathbb Z}_p).
 \end{equation}
 And for all $(l_1,\dots,l_N)\in L_k$ we need
 \begin{equation}
 p(-1)^{-pl_k-1}\frac{(-pl_k-1)!}{\displaystyle\prod_{\substack{j=1\\ j\neq k}}^N (pl_j)!}-(-1)^{-l_k-1} \frac{(-l_k-1)!}{\displaystyle\prod_{\substack{j=1\\ j\neq k}}^N l_j!}\in p({\mathbb Q}\cap{\mathbb Z}_p).
 \end{equation}
 
 From (1.3) we have $-l_k = \sum_{\substack{j=1\\ j\neq k}}^N l_j$, so (4.3) is equivalent to the assertion that
 \begin{equation}
 -\frac{1}{l_k} \binom{-l_k}{l_1,\dots,\hat{l}_k,\dots,l_N}\in {\mathbb Q}\cap{\mathbb Z}_p.
 \end{equation}
We now apply Proposition 3.1.  Take $p^a$ to be the highest power of $p$ dividing $l_k$ and $p^b$ to be the highest power of $p$ dividing all $l_j$ for $j\neq k$.  We are assuming that $b=0$, so $\binom{-l_k}{l_1,\dots,\hat{l}_k,\dots,l_N}$ is divisible by $p^a$.  This proves (4.5).

Now consider (4.4).  We have $(-1)^{-pl_k-1} = (-1)^{-l_k-1}$ except when $p=2$ and $l_k$ is odd.  Putting aside this exceptional case for the moment, (4.4) is equivalent to 
\begin{equation}
-\frac{1}{l_k} \bigg(\binom{-pl_k}{pl_1,\dots,\widehat{pl}_k,\dots,pl_N} - \binom{-l_k}{l_1,\dots,\hat{l}_k,\dots,l_N}\bigg) \in p({\mathbb Q}\cap{\mathbb Z}_p).
\end{equation}
By Proposition 3.2 the difference inside the parentheses is divisible by $p^{a+1}$, which establishes (4.6) if we are not in the exceptional case.  In the exceptional case, we must show that
\begin{equation}
-\frac{1}{l_k} \bigg(\binom{-2l_k}{2l_1,\dots,\widehat{2l}_k,\dots,2l_N} + \binom{-l_k}{l_1,\dots,\hat{l}_k,\dots,l_N}\bigg) \in 2({\mathbb Q}\cap{\mathbb Z}_2)
\end{equation}
when $l_k$ is odd.  For $l_k$ odd we have $-\frac{1}{l_k} \binom{-l_k}{l_1,\dots,\hat{l}_k,\dots,l_N}\in {\mathbb Q}\cap{\mathbb Z}_2$, so (4.7) is equivalent to
\begin{equation}
-\frac{1}{l_k} \bigg(\binom{-2l_k}{2l_1,\dots,\widehat{2l}_k,\dots,2l_N} - \binom{-l_k}{l_1,\dots,\hat{l}_k,\dots,l_N}\bigg) \in 2({\mathbb Q}\cap{\mathbb Z}_2).
\end{equation}
But this is the case $p=2$ of (4.6), which has already been proved.
\end{proof}

\section{Logarithmic solutions, II}

As noted in Proposition 2.6, formal solutions of the $A$-hypergeometric system with parameter $\beta = {\bf 0}$ are of the form (2.7).  Although the series $\exp G_k(\lambda)$ are well-defined and have integral coefficients, we need to show that the same is true for $\exp\big(\sum_{k=1}^N \tilde{l}_kG_k(\lambda)\big)$ for all $\tilde{l} = (\tilde{l}_1,\dots,\tilde{l}_N)\in L$.  This will be the case if all $G_k(\lambda)$ lie in a common ring of series.

Let $C$ be the real cone generated by $\bigcup_{k=1}^N L_k$, so the exponents of all monomials in all $G_k(\lambda)$ lie in $C$.  Let $S$ be the ${\mathbb C}$-module of all series in the $\lambda^u$, $u\in {\mathbb Z}^N\cap C$, with ${\mathbb C}$-coefficients.  Then $G_k(\lambda)\in S$ for all $k$.  We verify that $S$ is a ring by checking that the cone $C$ has a vertex at the origin.  The argument follows the lines of the proof of \cite[Proposition 2.9]{AS1}.

\begin{proposition}
The cone $C$ has a vertex at the origin.
\end{proposition}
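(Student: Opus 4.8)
The plan is to show that $C$ contains no line, equivalently that $v\in C$ and $-v\in C$ force $v={\bf 0}$; for a convex cone with apex at the origin this is precisely the statement that the origin is a vertex. Since $C=\sum_{k=1}^N\operatorname{cone}(L_k)$, any such $v$ can be written as $v=\sum_{k=1}^N p_k$ and $-v=\sum_{k=1}^N q_k$ with each $p_k$ and $q_k$ a finite nonnegative real combination of vectors in $L_k$. Because every $l\in L_k$ satisfies $l_k\le 0$ and $l_j\ge 0$ for $j\ne k$, the vectors $p_k$ and $q_k$ satisfy the same inequalities, and all of them lie in $L\otimes{\mathbb R}$. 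Note also that relabeling the elements of $A$ only permutes the coordinates of ${\mathbb R}^N$ and so carries $C$ to the corresponding cone, so we are free to choose the labeling.

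Since the ${\bf a}_j$ are distinct, a generic linear form $\mu$ on ${\mathbb R}^n$ is injective on $A$, and I relabel so that $\mu({\bf a}_1)<\mu({\bf a}_2)<\cdots<\mu({\bf a}_N)$. The key point is: if $x\in L\otimes{\mathbb R}$ has $x_j\ge 0$ for $j<m$, $x_m\le 0$, and $x_j=0$ for $j>m$, then $x={\bf 0}$. Indeed, applying $h$ and $\mu$ to the relation $\sum_j x_j{\bf a}_j={\bf 0}$ gives $\sum_j x_j=0$ and $\sum_j x_j\mu({\bf a}_j)=0$; subtracting $\mu({\bf a}_m)$ times the first identity from the second and using $x_j=0$ for $j>m$ gives $\sum_{j<m}x_j(\mu({\bf a}_j)-\mu({\bf a}_m))=0$, a sum in which every term is $\le 0$ (as $x_j\ge 0$ and $\mu({\bf a}_j)<\mu({\bf a}_m)$), so $x_j=0$ for all $j<m$, and then $x_m=0$ by the first identity.

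With this I would argue by descending induction on $m$ that $v=\sum_{k=1}^m p_k$, $-v=\sum_{k=1}^m q_k$, and $(p_k)_j=(q_k)_j=0$ whenever $k\le m<j$: the case $m=N$ is the decomposition above, and $m=0$ gives $v={\bf 0}$. For the step from $m$ to $m-1$, the inductive hypothesis says $(p_m)_j=0$ for $j>m$, so $p_m$ meets the hypotheses of the key point and $p_m={\bf 0}$; likewise $q_m={\bf 0}$. Hence $v=\sum_{k<m}p_k$, and since each $(p_k)_m\ge 0$ for $k<m$ (because $m\ne k$), we get $v_m\ge 0$; the same argument applied to $-v=\sum_{k<m}q_k$ gives $-v_m\ge 0$, so $v_m=0$, which forces $(p_k)_m=(q_k)_m=0$ for all $k<m$. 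Combined with the inductive hypothesis, this is the assertion for $m-1$.

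I expect no serious obstacle: the whole argument rests on the sign conditions defining the $L_k$ together with the two linear identities $\sum_j l_j=0$ and $\sum_j l_j\mu({\bf a}_j)=0$ satisfied by every $l\in L\otimes{\mathbb R}$. The two points requiring care are the relabeling, which is exactly where one uses that $A$ is a set of distinct vectors (the conclusion is false otherwise, e.g. if two of the ${\bf a}_j$ coincide), and the simultaneous bookkeeping of $v$ and $-v$ throughout the descending induction, which is what makes the one-sided inequalities collapse to equalities.
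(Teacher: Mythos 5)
Your argument is correct, and while it rests on the same underlying fact as the paper's proof, it is organized quite differently. Both hinge on the observation that a nonzero $l\in L_k$ writes ${\bf a}_k$ as a convex combination of the ${\bf a}_j$ with $l_j>0$ (the paper isolates this as Lemma~5.5; your ``key point'' is this observation combined with extremality of ${\bf a}_m$ for the functional $\mu$), and both therefore need the ${\bf a}_j$ to be distinct, as you correctly flag. The paper first converts ``$C$ contains a line'' into a single integral relation ${\bf 0}=\sum_{k=1}^N l^{(k)}$ with $l^{(k)}\in L_k$ (clearing denominators and using that each $L_k$ is closed under nonnegative integral combinations), then chooses a vertex ${\bf a}_{j_0}$ of the convex hull of the relevant ${\bf a}_j$ and reaches a contradiction in one step by inspecting the $j_0$-th coordinate. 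You instead stay with real cones, keep the decompositions of $v$ and $-v$ separate, replace ``vertex of a convex hull'' by ``maximizer of a generic linear form $\mu$,'' and run a descending induction that eliminates the summands and coordinates one index at a time. Your version buys a more self-contained argument: it never passes from real to integral coefficients and never invokes the fact that a cone containing a subspace places the origin in the interior of a convex hull of generators; everything reduces to the two identities $\sum_j x_j=0$ and $\sum_j x_j\mu({\bf a}_j)=0$ together with sign bookkeeping. The paper's version buys brevity: a single well-chosen extremal point ends the proof at once, whereas your induction must track the vanishing of $(p_k)_j$ and $(q_k)_j$ over all pairs of indices. Both proofs are complete and correct.
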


\begin{proof}
If the assertion of the proposition were false, a subspace of ${\mathbb R}^N$ would be contained in $C$.  This would imply that the origin is an interior point of the convex hull of a subset of $\bigcup_{k=1}^N L_k$:
\[ {\bf 0} = \sum_{i=1}^M c_i\ell_i, \]
where the $c_i$ are positive rational numbers summing to 1 and the $\ell_i$ lie in $\bigcup_{k=1}^N L_k$.  We can multiply this equation by a nonnegative integer and assume the $c_i$ are positive integers.  And since each set $L_k$ is closed under nonnegative integral linear combinations we may assume 
\begin{equation}
{\bf 0} = \sum_{k=1}^N l^{(k)},
\end{equation}
where $l^{(k)}=(l^{(k)}_1,\dots,l^{(k)}_N)\in L_k$ for $k=1,\dots,N$.
To prove that the origin is a vertex of $C$, we show that (5.2) holds if and only if $l^{(k)}={\bf 0}$ for all $k$.

The ``if'' assertion is trivial, so we show that (5.2) implies $l^{(k)}={\bf 0}$ for $k=1,\dots,N$.  Since $l^{(k)}\in L_k$ we have
\begin{equation}
\sum_{j=1}^N l^{(k)}_j{\bf a}_j = 0\quad\text{for each $k=1,\dots,N$}.
\end{equation}
If $l^{(k)}\neq {\bf 0}$, then by the definition of $L_k$ and (1.3) we have $l^{(k)}_k<0$ and $l^{(k)}_j>0$ for at least one $j$, $j\neq k$.  We can then solve Equation (5.3) for ${\bf a}_k$:
\begin{equation}
{\bf a}_k = \sum_{\substack{j=1\\ j\neq k}}^N \bigg(-\frac{l^{(k)}_j}{l^{(k)}_k}\bigg){\bf a}_j.
\end{equation}
The coefficients on the right-hand side of Equation (5.4) are nonnegative rational numbers that sum to 1 by (1.3), so (5.4) implies the following statement:
\begin{lemma}
If $l^{(k)}\neq 0$, then ${\bf a}_k$ is an interior point of the convex hull of the set $\{{\bf a}_j\mid l^{(k)}_j>0\}$.  
\end{lemma}

Let
\[ D = \{{\bf a}_j\mid \text{$l^{(k)}_j\neq 0$ for some $k\in\{1,\dots,N\}$}\}. \]
Then $l^{(k)}={\bf 0}$ for all $k$ if and only if $D=\emptyset$.  We prove that $l^{(k)}={\bf 0}$ for all $k$ by assuming that $D\neq \emptyset$ and deriving a contradiction.

If $D\neq\emptyset$ we may choose $j_0\in\{1,\dots,N\}$ for which ${\bf a}_{j_0}$ is a vertex of the convex hull of $D$.  Lemma~5.5 implies that $l^{(j_0)}={\bf 0}$.  In particular, if $l^{(k)}\neq{\bf 0}$, then $k\neq j_0$.  But by the definition of $L_k$, the only coordinate of $l^{(k)}$ that can be negative is $l^{(k)}_k$, all other coordinates must be nonegative.  So $l^{(k)}\neq{\bf 0}$ implies that $l^{(k)}_{j_0}\geq 0$.  Furthermore, since ${\bf a}_{j_0}\in D$, we must have $l^{(k)}_{j_0}>0$ for some $k$.  But this implies that $\sum_{k=1}^N l^{(k)}_{j_0}>0$, contradicting (5.2).  
\end{proof}

\end{document}